
%

\documentclass{amsart}

\newtheorem{theorem}{Theorem}[section]
\newtheorem{lemma}[theorem]{Lemma}

\theoremstyle{definition}

\theoremstyle{remark}

\numberwithin{equation}{section}

\newcommand{\refth}[1]{Theorem~\ref{#1}}
\newcommand{\reflm}[1]{Lemma~\ref{#1}}

\newcommand{\refsec}[1]{Section~\ref{#1}}

\newcommand{\refeq}[1]{(\ref{#1})}

\usepackage{amsfonts}
\usepackage{amssymb}
\usepackage{graphicx}
\usepackage{enumerate}
\usepackage{amsmath}

\newcommand{\beeq}[1]{\begin{equation} \label{#1}}
\newcommand{\eeq}{\end{equation}}
\newcommand{\beeqs}{\begin{eqnarray*}}
\newcommand{\eeqs}{\end{eqnarray*}}
\renewcommand{\(}{\begin{eqnarray*}}
\renewcommand{\)}{\end{eqnarray*}}
\newcommand{\beeqn}{\begin{eqnarray}}
\newcommand{\eeqn}{\end{eqnarray}}

\newcommand{\nexteqline}{\\ &=&}
\newcommand{\eqand}{\mbox{~~~and~~~}}

\renewcommand{\quad}{\hspace*{3mm}}
\renewcommand{\qquad}{\hspace*{5mm}}

\newcommand{\lp}{\left(  }
\newcommand{\rp}{\right) }
\newcommand{\lb}{\left\{  }
\newcommand{\rb}{\right\} }
\newcommand{\lbr}{\left[  }
\newcommand{\rbr}{\right] }

\newcommand{\lc}{\left\lceil}
\newcommand{\rc}{\right\rceil}

\newfont{\Bb}{msbm8 scaled\magstep1}

\newcommand{\Z}{\mbox{\Bb Z}}

\newcommand{\R}{\mbox{\Bb R}}

\newcommand{\ep}{\epsilon}

\begin{document}

\title{Asymptotic Improvement of the Sunflower Bound}


\author{Junichiro Fukuyama}
\address{Applied Research Laboratory\\
The Pennsylvania State University\\
PA 16802, USA}
\curraddr{}
\email{jxf140@psu.edu}
\thanks{}


\subjclass[2010]{05D05:Extremal Set Theory (Primary)}

\keywords{Sunflower Lemma, Sunflower Conjecture, $\Delta$-System}

\date{}

\dedicatory{}

\begin{abstract}
A {\em sunflower with a core $Y$} is a family ${\mathcal B}$ of sets such that $U \cap U' = Y$ for each two different elements $U$ and $U'$ in ${\mathcal B}$. The well-known sunflower lemma states that a given family ${\mathcal F}$ of sets, each of cardinality at most $s$, includes a sunflower of cardinality $k$ if $|{\mathcal F}|> (k-1)^s s!$. Since Erd\"os and Rado proved it in 1960, it has not been known for more than half a century whether the sunflower bound $(k-1)^s s!$ can be improved asymptotically for any $k$ and $s$. It is conjectured that it can be reduced to $c_k^s$ for some real number $c_k>0$ depending only on $k$, which is called the {\em sunflower conjecture}. This paper shows that the general sunflower bound can be indeed reduced by an exponential factor: 
We prove that ${\mathcal F}$ includes a sunflower of cardinality $k$ if 
\(
&&
|{\mathcal F}| \ge \lp \sqrt{10} -2 \rp^2 \lbr k \cdot \min \lp \frac{1}{\sqrt{10} -2}, \frac{c}{\log \min (k, s)} \rp \rbr^s s!,
\)
for a constant $c>0$, and any $k \ge 2$ and $s \ge 2$. For instance, whenever $k \ge s^\ep$ for a given constant $\ep \in (0,1)$, the sunflower bound is reduced from $(k-1)^s s!$ to $(k-1)^s s! \cdot \lbr O \lp \frac{1}{\log s}\rp \rbr^s$, achieving the reduction ratio of $\lbr O \lp \frac{1}{\log s}\rp\rbr^s$. Also any ${\mathcal F}$ of cardinality at least $\lp \sqrt{10} - 2 \rp^2 \lp \frac{k}{\sqrt{10}-2} \rp^s s!$ includes a sunflower of cardinality $k$, where $\frac{1}{\sqrt{10} -2}=0.8603796 \ldots$.
Our result demonstrates that the sunflower bound can be improved by a factor of less than a small constant to the power $s$, giving hope for further update.
\end{abstract}

\maketitle

\section{Introduction}

A set means a subset of a given universal set $X$. Denote by ${\mathcal F}$ a family of sets, and by ${\mathcal B}$ its sub-family. For a set $Y \subset X$, a {\em sunflower with a core $Y$} is a family ${\mathcal B}$ of sets such that $U \cap U' = Y$ for each two different elements $U$ and $U'$ in ${\mathcal B}$. Equivalently, ${\mathcal B}$ is a sunflower if $U \cap U' = \bigcap_{V \in {\mathcal B}} V$ for any $U, U'  \in {\mathcal B}$ such that $U \ne U'$.
A sunflower of cardinality $k$ is called {\em $k$-sunflower} for short.  A {\em constant} is a fixed positive real number depending on no variable.

The sunflower lemma shown by Erd\"os and Rado \cite{OriginalSF} states that:

\begin{lemma} 
A family ${\mathcal F}$ of sets, each of cardinality at most $s$, includes a $k$-sunflower if $|{\mathcal F}|> (k-1)^s s!$. 
\end{lemma}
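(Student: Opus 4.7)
My plan is to prove this by induction on the maximum cardinality $s$. For the base case $s = 1$, any family of more than $k-1$ sets of size at most one must contain at least $k$ distinct sets (each a singleton or the empty set), and any such $k$ sets have pairwise empty intersection, forming a $k$-sunflower with empty core.

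For the inductive step, I will assume the result at $s-1$ and suppose $|\mathcal{F}| > (k-1)^s s!$ with every set of size at most $s$. The central move is a dichotomy produced by extracting a maximal subfamily $A_1, \ldots, A_m \in \mathcal{F}$ of pairwise disjoint sets. If $m \geq k$, then $\{A_1, \ldots, A_k\}$ is already a $k$-sunflower with empty core and we are done. Otherwise $m \leq k-1$, so the union $Y = A_1 \cup \cdots \cup A_m$ satisfies $|Y| \leq (k-1)s$, and maximality forces every set in $\mathcal{F}$ to meet $Y$.

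In the latter case I apply pigeonhole over $Y$: some element $x \in Y$ lies in more than $|\mathcal{F}|/|Y|$ members of $\mathcal{F}$, which by the bound $|Y| \leq (k-1)s$ exceeds $(k-1)^{s-1}(s-1)!$. Let $\mathcal{F}' = \{U \setminus \{x\} : U \in \mathcal{F},\ x \in U\}$; the sets $U \setminus \{x\}$ remain distinct (two sets containing $x$ cannot collapse after deleting $x$), so $|\mathcal{F}'| > (k-1)^{s-1}(s-1)!$ and each element of $\mathcal{F}'$ has size at most $s-1$. By the inductive hypothesis, $\mathcal{F}'$ contains a $k$-sunflower $\{B_1, \ldots, B_k\}$ with some core $C$; reinserting $x$ yields $\{B_1 \cup \{x\}, \ldots, B_k \cup \{x\}\} \subseteq \mathcal{F}$, a $k$-sunflower with core $C \cup \{x\}$ because the pairwise intersections $(B_i \cup \{x\}) \cap (B_j \cup \{x\}) = C \cup \{x\}$ are all equal.

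The arithmetic $(k-1)^s s! / [(k-1)s] = (k-1)^{s-1}(s-1)!$ is precisely what makes the recursion close, so I do not expect any serious obstacle. The one creative ingredient is the maximal-disjoint-subfamily dichotomy, which either produces a sunflower immediately or concentrates the family onto a common element and reduces to the case $s-1$; without that observation, it is not obvious how to convert the factorial bound into a clean inductive reduction.
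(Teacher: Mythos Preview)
Your proof is correct and is essentially the same classical Erd\H{o}s--Rado argument that the paper reviews in Section~3: both take a maximal pairwise-disjoint subfamily, and either it already has $k$ members or its union is small enough that (by pigeonhole/induction hypothesis) some element lies in more than $(k-1)^{s-1}(s-1)!$ sets, whence deleting that element and applying induction yields the sunflower. The paper merely orders the steps differently, first assuming the contrapositive $|\mathcal{F}_{sup}(\{v\})|\le \Phi_0(s-1)$ for all $v$ and then deriving $r\ge k$, but the content is identical.
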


Since its proof was given in 1960, it has not been known whether the sunflower bound $(k-1)^s s!$ can be asymptotically improved for any $k$ and $s$, despite its usefulness in combinatorics and various applications \cite{Fu91, Jukna}. It is conjectured that the bound can be reduced to $c_k^s$ for a real number $c_k>0$ only depending on $k$, which is called {\em the sunflower conjecture}. The results known so far related to this topic include:

\begin{enumerate} [-]
\item Kostochka \cite{Ko97} showed that 
the sunflower bound for $k=3$ is reduced from $2^s s!$ to 
$c s! \lp \frac{\log \log \log s}{\log \log s} \rp^s$ for a constant $c$. The case $k=3$ of the sunflower conjecture is especially emphasized by Erd\"os \cite{E81}, which other researchers also believe includes some critical difficulty. 
\item It has also been shown \cite{KRT99} that ${\mathcal F}$ of cardinality greater than $k^s\lp 1+ c_s k^{-2^{-s}}\rp$ includes a $k$-sunflower for some $c_s \in \R^+$ depending only on $s$. 
\item With the sunflower bound $(k-1)^s s!$, Razborov proved an exponential lower bound on the monotone circuit complexity of the clique problem \cite{Raz}. Alon and Boppana strengthened the bound \cite{AB87}
by relaxing the condition to be a sunflower from $U \cap U' = \bigcap_{V \in {\mathcal B}} V$  to $U \cap U'  \supset \bigcap_{V \in {\mathcal B}} V$ for all $U, U' \in {\mathcal F}, U \ne U'$. 
\item \cite{ASU13} discusses the sunflower conjecture and its variants in relation to fast matrix multiplication algorithms. Especially, it is shown in the paper that if the sunflower conjecture is true, the Coppersmith-Winograd conjecture implying a faster matrix multiplication algorithm \cite{CW90} does not hold. 
\end{enumerate}

In this paper we show that the general sunflower bound can be indeed improved by an exponential factor. We prove the following theorem.

\begin{theorem} \label{Main}
A family ${\mathcal F}$ of sets, each of cardinality at most $s$, includes a $k$-sunflower if
\[
|{\mathcal F}| \ge \lp \sqrt{10}-2 \rp^2 \lbr k \cdot \min \lp \frac{1}{\sqrt{10} -2}, \frac{c}{\log \min (k, s)} \rp \rbr^s s!,
\]
for a constant $c$ and any integers $k \ge 2$ and $s \ge 2$. 
\end{theorem}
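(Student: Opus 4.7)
The plan is to refine the classical induction underlying the Erd\"os--Rado sunflower lemma. Recall that the standard argument picks a maximal subfamily $\mathcal{M} \subset \mathcal{F}$ of pairwise disjoint sets: if $|\mathcal{M}| \ge k$ we are done, and otherwise every $F \in \mathcal{F}$ meets $U = \bigcup \mathcal{M}$, where $|U| \le (k-1)s$. By pigeonhole some $x \in U$ lies in at least $|\mathcal{F}|/((k-1)s)$ of the sets, and deleting $x$ yields an $(s-1)$-uniform subfamily to which induction applies. Each recursive step thus inflates the required cardinality by a factor of $(k-1)s$, and this multiplicative loss is precisely what I aim to tighten.

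My first refinement is a case split that produces the constant factor $1/(\sqrt{10}-2)$. At each recursive step I consider two regimes. In the ``heavy core'' regime, the most frequent element of $U$ lies in more than a $1/(\alpha k s)$ fraction of $\mathcal{F}$ for some parameter $\alpha < 1$; conditioning on this element yields a recursion with multiplier $\alpha k s$ rather than $ks$. In the ``balanced'' regime no such popular element exists, so frequencies are spread across $U$; a greedy exchange argument then locates a set nearly disjoint from $\bigcup \mathcal{M}$, extending the matching past size $k-1$ and contradicting its maximality unless $|\mathcal{F}|$ is below the bound. Balancing the loss from these two branches produces a quadratic inequality in $\alpha$ whose positive root is $\sqrt{10}-2$, and iterating the recursion down to the base cases $s \in \{0,1\}$ contributes the prefactor $(\sqrt{10}-2)^2$.

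The logarithmic factor $c/\log\min(k,s)$ calls for a finer, amortized pigeonhole. Rather than charge each recursive step to a single element, I average over the $t = \lceil c \log \min(k,s) \rceil$ most frequent elements of $U$. Either one of these elements lies in an $\Omega(1/\log \min(k,s))$ fraction of $\mathcal{F}$, which reduces the recursive multiplier from $ks$ to $O(ks/\log\min(k,s))$; or the frequencies across the top $t$ are comparable, in which case the sets containing each chosen element form a sufficiently ``spread'' subfamily that a random sub-sampling argument on the associated intersection patterns with $U$ directly produces a $k$-sunflower. This dichotomy is the source of the $\log\min(k,s)$ denominator in the main bound.

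The main obstacle I anticipate is combining both refinements in a single induction. The logarithmic gain relies on a spread condition that is fragile under conditioning on a heavy element, while the constant-factor gain requires tight control of the pairwise intersection structure of $\mathcal{M}$; reconciling the two across the recursion, especially while $\min(k,s)$ decreases as $s$ shrinks, is delicate. In particular, the induction hypothesis must be strong enough to preserve the \emph{same} constant $c$ when passing from $(s,k)$ to $(s-1,k)$, so that the $\log$-factor compounds properly across the depth-$s$ recursion tree; arranging the invariants to make this go through, and correctly stitching the base cases, will be the technical heart of the argument.
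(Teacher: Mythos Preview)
Your proposal identifies the right high-level skeleton---refine the Erd\H{o}s--Rado induction by exploiting the maximal coreless sunflower $\mathcal{M}$ more carefully---but the two concrete mechanisms you sketch are not the ones the paper uses, and neither is developed enough to stand as a proof.

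For the $\sqrt{10}-2$ bound, the paper does not run a heavy-element/balanced dichotomy. Instead, assuming $r=|\mathcal{M}|<(1+\delta)x$ with $\delta=\sqrt{10}-3$ and $x=k/(1+\delta)$, it first shows that at least a $(1-\delta)$-fraction of $\mathcal{F}$ meets $B=\bigcup\mathcal{M}$ in \emph{exactly one} point, then pigeonholes to find a block $B_1\in\mathcal{M}$ with many sets meeting $B_1$ alone. The key move is a \emph{swap}: replace $B_1$ by some $B_1'$ meeting $B_1$ in a single vertex and disjoint from $B\setminus B_1$. Maximality of the new matching forces $\mathcal{F}(B_1)\cap\mathcal{F}(B_1')$ to be large, while the induction hypothesis bounds it above by $(\tfrac1s+\tfrac1x)\Phi_1(s)/x$. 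The constant falls out of the identity $\tfrac{1-\delta}{1+\delta}=\tfrac13+\tfrac{1+\delta}{3}$, i.e.\ from $\delta^2+6\delta-1=0$, evaluated at the base $s=k=3$. Your ``quadratic in $\alpha$ with root $\sqrt{10}-2$'' is asserted but never written down; without it there is no way to check that your two branches balance at exactly this value, and the ``greedy exchange'' you invoke in the balanced case is precisely the delicate swap argument that carries all the content.

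For the logarithmic factor the gap is more serious. The paper does not rank the top $t$ frequent elements or subsample. It bins $\mathcal{F}$ by the intersection size $j=|U\cap B|$, finds some $j\le 2p=\tfrac14\ln\min(k,s)$ with $|\mathcal{F}_j(B)|\ge\Phi_2(s)/(4p)$, and then pigeonholes over the $\binom{r}{j}$ sub-matchings $\mathcal{B}'\subset\mathcal{M}$ to locate one for which many $U$ hit every block of $\mathcal{B}'$ and miss every block of $\mathcal{M}\setminus\mathcal{B}'$. Inside this subfamily $\mathcal{H}$ it shows, by direct counting with Stirling, that every $V\in\mathcal{H}$ meets fewer than $|\mathcal{H}|/(2|\mathcal{B}'|)$ other members, so $\mathcal{H}$ contains a coreless sunflower of size exceeding $|\mathcal{B}'|$, contradicting maximality of $r$. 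Your alternative---``frequencies across the top $t$ are comparable, so a random sub-sampling argument on intersection patterns directly produces a $k$-sunflower''---is the entire theorem restated as a black box. You yourself flag that reconciling the two regimes across the recursion is ``the technical heart'' and is not yet done; as written, this is a plausible aspiration rather than a proof sketch.
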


This improves the sunflower bound by the factor of $\lbr O \lp \frac{1}{\log s} \rp \rbr^s$ whenever $k$ exceeds $s^\ep$ for a given constant $\ep \in (0, 1)$. Also any ${\mathcal F}$ of cardinality at least $\lp \sqrt{10} - 2 \rp^2 \lp \frac{k}{\sqrt{10} -2}\rp^s s!$ includes a $k$-sunflower.

We split its proof in two steps. We will show:

\medskip

\noindent
{\bf Statement I:~} ${\mathcal F}$ includes a $k$-sunflower if 
$
|{\mathcal F}| \ge \lp \sqrt{10} - 2 \rp^2 \lp \frac{k}{\sqrt{10}-2}\rp^s s!
$
for any positive integers $s$ and $k$. 

\medskip

\noindent
{\bf Statement II:~} ${\mathcal F}$ includes a $k$-sunflower if 
$
|{\mathcal F}| \ge \lbr \frac{c k}{\log \min \lp s, k \rp} \rbr^s s!
$
for some constant $c$ and any integers $s \ge 2$ and $k \ge 2$.

\medskip

\noindent
It is clear that the two statements mean \refth{Main}. The rest of the paper is dedicated to the description of their proofs.

\section{Terminology and Related Facts} \label{Settings}

Denote an arbitrary set by $S$ that is a subset of $X$. 
Given a family ${\mathcal F}$ of sets of cardinality at most $s$, define
\(
&&
{\mathcal F}(S) \stackrel{def}{=} \lb U~:~ U \in {\mathcal F}
\textrm{~and~}
U \cap S \ne \emptyset 
\rb,
\\ &&
{\mathcal F}_j(S) \stackrel{def}{=} \lb U~:~ U \in {\mathcal F} 
\textrm{~and~}
|U \cap S| =j
\rb
\textrm{~~for positive integer $j$,}
\\ &&
{\mathcal F}_{sup}(S) \stackrel{def}{=} \lb U~:~ U \in {\mathcal F}
\textrm{~and~}
U \supset S 
\rb,
\textrm{~~and}
\\ &&
{\mathcal P}(S) \stackrel{def}{=} \lb (v, U) ~:~ v\in X,~
U \in {\mathcal F} 
\textrm{~and~} v \in U \cap S
\rb.
\)

Let $\ep \in (0, 1/8)$ be a constant and $k \in \Z^+$. Given such numbers, we use the following two functions as lower bounds on $|{\mathcal F}|$:
\(
&&
\Phi_1(s) \stackrel{def}{=} \lp \sqrt{10} - 2 \rp^2 \lp \frac{k}{\sqrt{10}-2} \rp^s s!, 
\eqand
\\ &&
\Phi_2(s) \stackrel{def}{=} 
\frac{k^s s!}{p_1 p_2 \cdots p_s},
\textrm{~~~where~~~}
p_j \stackrel{def}{=} 
\lb
\begin{array}{cc}
\ep \ln \min \lp j, k \rp & \textrm{if $j \ge 2$,}\\
\ep & \textrm{if $j=1$.}
\end{array}
\right.
\)
Here $\ln \cdot$ denotes the natural logarithm of a positive real number. We regard
$
\Phi_i(j) = 0
$
if $j \not\in \Z^+$ for each $i=1,2$.

The following lemma shows that Statement II is proved if $|{\mathcal F}| \ge \Phi_2(s)$ means a $k$-sunflower in ${\mathcal F}$.

\begin{lemma} \label{Phi2Bound}
There exists a constant $c$ such that 
$
\Phi_2(s) \le \lp \frac{c k}{\ln \min \lp k, s \rp} \rp^s s!
$
for any $s \ge 2$ and $k \ge 2$.
\end{lemma}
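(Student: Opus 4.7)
The plan is to reduce the claim to the single inequality
\[
\prod_{j=2}^N \ln j \;\ge\; \lp \frac{\ln N}{C} \rp^N \qquad (\star)
\]
for $N = \min(k,s)$ and some absolute constant $C$, and then prove $(\star)$ by an elementary partial-sum estimate.

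First I would expand the denominator of $\Phi_2(s)$: since $p_1 = \ep$ and $p_j = \ep \ln \min(j,k)$ for $j \ge 2$,
\[
p_1 p_2 \cdots p_s \;=\; \ep^s \prod_{j=2}^s \ln \min(j,k),
\]
which equals $\ep^s \prod_{j=2}^s \ln j$ when $k \ge s$ and $\ep^s (\ln k)^{s-k} \prod_{j=2}^k \ln j$ when $k < s$. A direct rearrangement then shows that the desired bound $\Phi_2(s) \le (ck/\ln N)^s s!$ is equivalent, in either case, to $(\star)$ with the choice $c = C/\ep$ (enlarged if necessary so that $c\ep \ge C$). In the case $k<s$ this uses the observation that the factor $(\ln k)^{s-k}$ appears on both sides once we rewrite $(\ln N)^s$ as $(\ln k)^k (\ln k)^{s-k}$.

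To establish $(\star)$ I would take logarithms and bound
\[
\sum_{j=2}^N \ln \lp \frac{\ln N}{\ln j} \rp \;\le\; N \ln C,
\]
splitting the sum at $j_0 = \lc \sqrt{N}\,\rc$. For $j \ge j_0$ we have $\ln j \ge \tfrac{1}{2} \ln N$, so each term is at most $\ln 2$ and this piece contributes at most $N \ln 2$. For $2 \le j < j_0$ each term is at most $\ln \ln N + O(1)$, and since there are fewer than $\sqrt{N}$ such $j$, this piece contributes only $O(\sqrt{N}\,\ln \ln N) = o(N)$. Thus the total is $N\ln 2 + o(N) \le N \ln C$ for any fixed $C > 2$ once $N$ is large enough, and the finitely many remaining values of $N$ are absorbed by enlarging $C$ (using that $\prod_{j=2}^N \ln j > 0$ for all $N \ge 2$). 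The step requiring the most attention is the reduction in the case $k < s$, where the auxiliary factor $(\ln k)^{s-k}$ must be tracked so that one constant $c$ works uniformly in $k$ and $s$; the rest is routine integral-style estimation.
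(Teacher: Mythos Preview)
Your proof is correct and follows the same overall reduction as the paper: both express $\Phi_2(s)$ explicitly, split into the cases $s \le k$ and $s > k$, factor out $(\ln k)^{s-k}$ in the latter case, and reduce everything to the single inequality $\prod_{j=2}^{N} \ln j \ge (\ln N / C)^N$ for $N = \min(k,s)$. The only difference lies in how that core inequality is established. The paper takes logarithms and bounds $\sum_{j=2}^{N} \ln\ln j$ from below by the integral $\int_{2}^{N} \ln\ln x\,dx = N\ln\ln N - li(N) + \textrm{const}$, then uses $li(N)-li(2) < 2N$ to obtain the explicit constant $C = e^2$. You instead bound the dual sum $\sum_{j=2}^{N} \ln(\ln N/\ln j)$ by splitting at $j_0 = \lceil\sqrt{N}\,\rceil$ and handling the two ranges separately. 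Your argument is a bit more elementary in that it avoids the logarithmic integral, at the cost of a non-explicit constant absorbed at the end; structurally the two proofs are the same.
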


\noindent
It is shown by $p_1 p_2 \cdots p_s \ge \lp c' \ln \min \lp k, s \rp \rp^s$ for another constant $c'$. Its exact proof is found in Appendix.

We also have 
\beeqn && \label{Phi2}
\Phi_2 \lp s \rp = \frac{ks}{p_s} \Phi_2(s-1)= \frac{ks \cdot k(s-1)}{p_s p_{s-1}}\Phi_2(s-2) =\cdots
\nexteqline \nonumber
\frac{k^j s(s-1) \cdots (s-j+1)} {p_s p_{s-1} \cdots p_{s-j+1}} \Phi_2(s-j)
\nexteqline  \nonumber
\frac{k^j}{p_s p_{s-1} \cdots p_{s-j+1}}
\cdot \frac{s!}{(s-j)!}  \Phi_2(s-j)
\\ &\ge& \nonumber
\lp \frac{k}{p_s} \rp^j \frac{s!}{(s-j)!} \Phi_2(s-j),
\eeqn
for each positive integer $j <s$. The last inequality is due to $p_2 \le p_3 \le \cdots \le p_s$.

To derive another inequality from \refeq{Phi2}, we use Stirling's approximation  \\$\lim_{n \rightarrow \infty} \frac{n!}{\sqrt{2 \pi n}\lp \frac{n}{e} \rp^n} = 1$ where $e=2.71828...$ denotes the natural logarithm base. In a form of double inequality, it is known as
\[
\sqrt{2 \pi n} \cdot n^n e^{-n + \frac{1}{12n+1} } 
< n! <
\sqrt{2 \pi n} \cdot n^n e^{-n + \frac{1}{12n}},
\]
for $n \in \Z^+$ \cite{R55}. This means
\beeq{Stirling}
\sqrt{2 \pi n} \cdot n^n e^{-n}< n! 
\le \sqrt{n} \cdot n^n e^{-n+1}.
\eeq
Thus, 
\beeq{asymptotic}
{n \choose m} < \frac{n^{m}}{m!} < \exp \lp m \ln \frac{n}{m} + m  - \ln \sqrt{2 \pi m} \rp
< \exp \lp m \ln \frac{n}{m} + m \rp,
\eeq
for positive integers $n$ and $m$ such that $n \ge m$. We substitute 
$s! > \sqrt{2 \pi s} \cdot s^s e^{-s}$ and $(s-j)! \le \sqrt{s-j} \cdot (s-j)^{s-j} e^{-s+j+1}$ 
from \refeq{Stirling} into \refeq{Phi2} to see:

\begin{lemma} \label{Stirling2}
$\Phi_2(s) > \Phi_2(s-j) \exp \lp j \ln \frac{ks}{p_s}  - \frac{j^2}{s}  -1 \rp$
for a positive integer $j<s$. 
\end{lemma}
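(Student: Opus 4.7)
The lemma is essentially a direct computation: combine the recursive lower bound on $\Phi_2(s)$ already established in \refeq{Phi2} with the Stirling estimates \refeq{Stirling}, and extract the correct exponent. So the plan is to start from
\[
\Phi_2(s) \ge \lp \frac{k}{p_s} \rp^j \frac{s!}{(s-j)!} \Phi_2(s-j),
\]
and lower-bound the ratio $\frac{s!}{(s-j)!}$ by something of the form $s^j \exp(-\frac{j^2}{s} - 1)$, so that the factor $s^j$ combines with $k/p_s$ into $ks/p_s$ inside the exponential.

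Plugging in the two sides of \refeq{Stirling}, namely $s! > \sqrt{2\pi s}\,s^s e^{-s}$ and $(s-j)! \le \sqrt{s-j}\,(s-j)^{s-j} e^{-s+j+1}$, gives
\[
\frac{s!}{(s-j)!} > \frac{\sqrt{2\pi s}}{\sqrt{s-j}} \cdot \frac{s^s}{(s-j)^{s-j}} \cdot e^{-j-1} > \frac{s^s}{(s-j)^{s-j}} \cdot e^{-j-1},
\]
since the prefactor $\sqrt{2\pi s/(s-j)}$ exceeds $1$. It remains to estimate $s^s/(s-j)^{s-j}$ from below. Writing $s^s/(s-j)^{s-j} = s^j \cdot (s/(s-j))^{s-j}$, I take logarithms and use the elementary bound $\ln(1-x) \le -x$ with $x = j/s$:
\[
\ln \frac{s^s}{(s-j)^{s-j}} = j \ln s - (s-j) \ln\lp 1 - \frac{j}{s}\rp \ge j \ln s + (s-j) \cdot \frac{j}{s} = j \ln s + j - \frac{j^2}{s}.
\]
Hence $s^s/(s-j)^{s-j} \ge s^j e^{j - j^2/s}$, and consequently $\frac{s!}{(s-j)!} > s^j \exp\!\lp -\frac{j^2}{s} - 1\rp$.

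Substituting this into the displayed inequality for $\Phi_2(s)$ gives
\[
\Phi_2(s) > \Phi_2(s-j)\lp \frac{k}{p_s}\rp^j s^j \exp\!\lp -\frac{j^2}{s} - 1\rp = \Phi_2(s-j) \exp\!\lp j \ln \frac{ks}{p_s} - \frac{j^2}{s} - 1\rp,
\]
which is exactly the claim. There is no real obstacle here; the only delicate point is picking the right logarithmic inequality in the step for $s^s/(s-j)^{s-j}$: using the crude $\ln(1-x) \le -x$ (rather than a second-order expansion like $\ln(1+x) \ge x - x^2/2$ applied to $j/(s-j)$) is what produces the clean denominator $s$, instead of $2(s-j)$, in the $j^2$-term. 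Everything else is bookkeeping.
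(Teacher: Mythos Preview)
Your proof is correct and follows essentially the same route as the paper: reduce via \refeq{Phi2} to bounding $\frac{s!}{(s-j)!}$, apply the two sides of \refeq{Stirling}, drop the $\sqrt{2\pi s/(s-j)}$ prefactor, and use the first-order bound $-\ln(1-j/s)>j/s$ (which the paper phrases via the Taylor series). The computations and the resulting exponent match the paper's proof line for line.
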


\noindent
A precise proof is also given in Appendix.

\section{The Improvement Method} \label{Method}

We will show Statements I and II by improving the original proof of the sunflower lemma in \cite{OriginalSF}. We review it in a way to introduce our proof method easily.  
The original proof shows by induction on $s$ that ${\mathcal F}$ includes a $k$-sunflower if $|{\mathcal F}|>\Phi_0(s)$, where 
\[
\Phi_0 (s) \stackrel{def}{=} (k-1)^s s!.
\]
The claim is clearly true in the induction basis $s=1$; the family ${\mathcal F}$ consists of more than $k-1$ different sets of cardinality at most 1 including a desired sunflower. 

To show the induction step, let
\[
{\mathcal B} = \lb B_1, B_2, \ldots, B_r \rb
\]
be a sub-family of ${\mathcal F}$ consisting of pairwise disjoint sets $B_i$, whose cardinality $r$ is maximum. We say that such ${\mathcal B}$ is a {\em maximal coreless sunflower in ${\mathcal F}$} for notational convenience in this paper. Also write
\[
B \stackrel{def}{=}  B_1 \cup B_2 \cup \cdots \cup B_r.
\]

We show $r \ge k$ to complete the induction step. We have 
\beeq{IH0}
\left| {\mathcal F}_{sup} \lp S \rp \right| \le \Phi_0(s - |S|) 
\textrm{~~for any $S \subset X$ such that $1 \le |S| < s$}, 
\eeq
for ${\mathcal F}_{sup} \lp S \rp$, the sub-family of ${\mathcal F}$ consisting of the sets including $S$ as defined in \refsec{Settings}. 
Otherwise a $k$-sunflower exists in ${\mathcal F}$ by induction hypothesis. Then the sub-family ${\mathcal F}(B)$ consisting of the sets intersecting with $B$ meets 
$
\left| {\mathcal F} \lp B \rp \right| \le |B| \Phi_0(s-1);
$
because for every element $v$ in $B$, the cardinality of ${\mathcal F}\lp \lb v \rb \rp= {\mathcal F}_{sup} \lp \lb v \rb \rp$ is bounded by $\Phi_0(s-1)$. Also $\Phi_0 \lp s- 1\rp= \frac{\Phi_0(s)}{ks}$. Thus, 
\beeqn
&& \label{SFProof}
\left| {\mathcal F} \lp B \rp \right|  \le |B| \Phi_0(s-1) \le rs \Phi_0(s-1)
\\ &=& \nonumber
rs \frac{\Phi_0(s)}{ks} = \frac{r}{k} \Phi_0(s) 
< \frac{r}{k} |{\mathcal F}|. 
\eeqn
Now $\left| {\mathcal F}(B) \right|$ is less than $|{\mathcal F}|$ unless $r \ge k$. In other words, if $r<k$, then ${\mathcal F}$ would have a set disjoint with any $B_i \in {\mathcal B}$, contradicting the maximality of $r=|{\mathcal B}|$. Hence $r \ge k$, meaning ${\mathcal B}$ includes a $k$-sunflower with an empty core. This proves the induction step.

\medskip

To improve this argument, we note that the proof works even if ${\mathcal F}_{sup} \lp \lb v \rb \rp$ for all $v \in B$ are disjoint. If so, each ${\mathcal F}_{sup} \lp \lb v \rb \rp$ includes elements $U \in {\mathcal F}$ only intersecting with $\lb v \rb$, and disjoint with $B- \lb v \rb$. Let $v \in B_i \in {\mathcal B}$. If we replace $B_i$ by any set $U$ in ${\mathcal F}_{sup} \lp \lb v \rb \rp$ such that $U \cap \lp B- \lb v \rb \rp= \emptyset$, then ${\mathcal B}$ is still a maximal coreless sunflower in ${\mathcal F}$.

On the other hand, if there are sufficiently many $U \in {\mathcal F}_{sup} \lp \lb v \rb \rp$ disjoint with $B - \lb v \rb$, we can find $U$ among them such that $|U \cap B_i|$ is much smaller than $s$. (Here we assume both $k$ are $s$ are are large enough.) This gives us the following contradiction: Due to the maximality of $r = |{\mathcal B}|$, the family ${\mathcal F}(U)$ must contain ${\mathcal F}(B_i) - {\mathcal F}(B- B_i)$. So ${\mathcal F}(U) \cap {\mathcal F}(B_i)$ includes most sets in ${\mathcal F}_{sup} \lp \lb v \rb \rp$ being a not too small family. But by \refeq{IH0}, $\left| {\mathcal F}(U) \cap {\mathcal F}(B_i) \right|$ is upper-bounded by 
\(
&&
s^2 \Phi_0(s-2)+ |U \cap B_i| \Phi_0(s-1)
\nexteqline 
s^2 \cdot \frac{\Phi_0(s)}{k^2 s (s-1)} 
+
|U \cap B_i| \cdot \frac{\Phi_0(s)}{ks}
\\ &<&
|{\mathcal F}|\lp \frac{1}{k^2\lp 1- \frac{1}{s} \rp} + \frac{|U \cap B_i|}{ks} \rp.
\)
As $|U \cap B_i|$ is much smaller than $s$, the cardinality $\left| {\mathcal F}(U) \cap {\mathcal F}(B_i) \right|$ is also small, $i.e.$ bounded by $|{\mathcal F}|$ times $O\lp \frac{1}{k^2} + \frac{|U \cap B_i|}{ks} \rp$. This contradiction on $\left| {\mathcal F}(U) \cap {\mathcal F}(B_i) \right|$ essentially means that if $r$ is around $k$, we can construct a larger coreless sunflower in ${\mathcal F}$. Hence $r$ must be more than $k$ with the cardinality lower bound $\Phi_0(s)$.

Our proof of \refth{Main} in the next section generalizes the above observation. By finding $B_i \in {\mathcal B}$ with sufficiently large $|{\mathcal F}(B_i) - {\mathcal B}(B- B_i)|$, we will show Statement I that  ${\mathcal F}$ such that $|{\mathcal F}| \ge \lp \sqrt{10} - 2 \rp^2 \lp \frac{k}{\sqrt{10}-2} \rp^2 s!$ includes a $k$-sunflower.

We will further extend this argument to show Statement II. Instead of finding just one such $B_i \in {\mathcal B}$, we will find  ${\mathcal B'} \subset {\mathcal B}$ such that a sub-family ${\mathcal H}$ of 
\beeq{FindB'}
{\mathcal F} \lp \bigcup_{S \in {\mathcal B}'} S \rp - {\mathcal F} \lp \bigcup_{S' \in {\mathcal B} - {\mathcal B}'} S' \rp 
\eeq
is sufficiently large. Then we show a maximal coreless sunflower in ${\mathcal H}$ whose cardinality is larger than $|{\mathcal B}'|$. This again contradicts the maximality of $r= |{\mathcal B}|$ to prove the second statement.

\section{Proof of \refth{Main}}

\subsection{Statement I} \label{Start}

We prove Statement I in this section. Put
\[
\delta = \sqrt{10}-3=0.16227\ldots,
\eqand 
x = \frac{k}{1+ \delta}  = \frac{k}{\sqrt{10}-2},
\]
then
\[
\Phi_1(s) = \lp \sqrt{10}-2 \rp^2 \lp \frac{k}{\sqrt{10}-2} \rp^s s!= 
\lp 1+ \delta \rp^2 x^s s!,
\]
as defined in \refsec{Settings}. We show that $|{\mathcal F}|\ge \Phi_1(s)$ means a $\lp 1+ \delta \rp x$-sunflower included in ${\mathcal F}$.

We prove it by induction on $s$. Its basis occurs when $s \le 2$. The claim is true by the sunflower lemma, since 
$
\Phi_1(s) = \lp 1+ \delta \rp^2 x^s s! \ge \lp (1+\delta) x \rp^s s!
= k^s s!>(k-1)^2 s!
$
if $s \le 2$. Assume true for $1, 2, \ldots, s-1$ and prove true for $s \ge 3$. As in \refsec{Method}, let  ${\mathcal B}=\lb B_1, B_2, \ldots, B_r \rb$ be a maximal coreless sunflower of cardinality $r$ in ${\mathcal F}$, and $B= B_1 \cup B_2 \cup \cdots \cup B_r$. Contrarily to the claim, let us assume 
\beeq{Cond1}
r < (1+\delta ) x.
\eeq
We will find a contradiction caused by \refeq{Cond1}.

Observe the following facts.

\begin{enumerate} [$\bullet$]
\item For any nonempty set $S \subset X$ such that $|S|<s$, if $|{\mathcal F}_{sup}(S)|\ge \Phi_1\lp s- |S| \rp$, the family ${\mathcal F}_{sup}(S)$ contains a $(1+\delta) x$-sunflower by induction hypothesis. Thus we assume 
\beeq{IH}
\left| {\mathcal F}_{sup}(S) \right| < \Phi_1\lp s- |S| \rp 
\textrm{~~for $S \subset X$ such that $1 \le |S|<s$}. 
\eeq
\item We also have $k \ge 3$, because  
$
\Phi_1(s) >0
$ 
for $k=1$, and\\
$
\Phi_2(s) = \lp \sqrt{10} - 2 \rp^2 \lp \frac{2}{\sqrt{10}-2} \rp^s s!
> s! = (k-1)^s s!
$
if $k = 2$. 
\item Since $r$ has the maximum value, 
\beeq{Cover}
{\mathcal F} = {\mathcal F}(B),
\eeq
$i.e.$, every set in ${\mathcal F}$ intersects with $B$. 
\item ${\mathcal P}(B)$, defined in \refsec{Settings} as the family of pairs $(v, U)$ such that $U \in {\mathcal F}$ and $v \in U \cap B$, has a cardinality bounded by 
\beeq{Bound1}
\left| {\mathcal P}(B) \right| \le |B| \Phi_1(s-1) \le rs \Phi_1(s-1) <(1+\delta)xs \cdot \frac{\Phi_1(s)}{xs}
\le (1+\delta) |{\mathcal F}|,
\eeq
due to \refeq{Cond1} and \refeq{IH}.
Here $\left| {\mathcal P}(B) \right| \le |B| \Phi_1(s-1)$ because for each $v \in B$, there are at most $\Phi_1(s-1)$ pairs $(v, U) \in {\mathcal P}(B)$. 
\end{enumerate}

We first see that many $U \in {\mathcal F}$ intersect with $B$ by cardinality $1$, $i.e.$, $|{\mathcal F}_1(B)|$ is sufficiently large. Observe two lemmas.

\begin{lemma} \label{L1}
$\left| {\mathcal F}_1(B) \right| > (1- \delta) |{\mathcal F}|$. 
\end{lemma}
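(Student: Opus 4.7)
\textbf{Proof plan for Lemma~\ref{L1}.} The plan is to double count $|{\mathcal P}(B)|$ against the sizes of the level sets ${\mathcal F}_j(B)$, and then use the upper bound already established in \refeq{Bound1}.

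First, note that by \refeq{Cover} every set $U \in {\mathcal F}$ intersects $B$, so ${\mathcal F}$ partitions as the disjoint union
\[
{\mathcal F} = \bigcup_{j \ge 1} {\mathcal F}_j(B),
\qquad
\textrm{hence} \qquad
\sum_{j \ge 1} |{\mathcal F}_j(B)| = |{\mathcal F}|.
\]
On the other hand, counting pairs $(v, U) \in {\mathcal P}(B)$ by grouping according to the value of $|U \cap B|$ yields
\[
|{\mathcal P}(B)| = \sum_{U \in {\mathcal F}} |U \cap B| = \sum_{j \ge 1} j \, |{\mathcal F}_j(B)|.
\]

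Next I would subtract the first identity from the second to isolate the contribution of $j \ge 2$:
\[
\sum_{j \ge 2} (j-1) |{\mathcal F}_j(B)| = |{\mathcal P}(B)| - |{\mathcal F}|.
\]
By the strict inequality in \refeq{Bound1}, the right-hand side is strictly less than $\delta |{\mathcal F}|$. Since $j - 1 \ge 1$ for $j \ge 2$, this forces $\sum_{j \ge 2} |{\mathcal F}_j(B)| < \delta |{\mathcal F}|$, and rearranging gives $|{\mathcal F}_1(B)| > (1-\delta)|{\mathcal F}|$ as required.

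There is no real obstacle here: the lemma is essentially a Markov-type consequence of \refeq{Bound1}. The only care needed is to make sure that the bound \refeq{Bound1} is indeed strict, which it is (it uses \refeq{Cond1} and the strict inductive bound \refeq{IH}), so the strict conclusion $|{\mathcal F}_1(B)| > (1-\delta)|{\mathcal F}|$ follows. The real work of the argument will come in the subsequent lemmas, which must exploit this abundance of singleton-intersecting sets to manufacture a contradiction with the maximality of $r$.
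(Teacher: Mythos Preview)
Your proof is correct and is essentially the same double-counting/Markov argument as the paper's, just presented directly rather than via contradiction: the paper sets $|{\mathcal F}_1(B)|=(1-\delta')|{\mathcal F}|$, bounds $|{\mathcal P}(B)|\ge (1-\delta')|{\mathcal F}|+2\delta'|{\mathcal F}|=(1+\delta')|{\mathcal F}|$, and then contradicts \refeq{Bound1} unless $\delta'<\delta$; your subtraction $\sum_{j\ge 2}(j-1)|{\mathcal F}_j(B)|=|{\mathcal P}(B)|-|{\mathcal F}|<\delta|{\mathcal F}|$ is the identical inequality rearranged.
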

\begin{proof}
Let $\left| {\mathcal F}_1(B) \right|  = (1- \delta') |{\mathcal F}|$ for some $\delta' \in [0,1]$. By \refeq{Cover}, there are $\delta' |{\mathcal F}|$ elements $U \in {\mathcal F}$ such that $|U \cap B| \ge 2$, each of which creates two  or more pairs in ${\mathcal P}(B)$. If $\delta' \ge \delta$, 
\[
|{\mathcal P}(B)|  \ge (1-\delta')|{\mathcal F}| + 2 \delta' |{\mathcal F}| =(1+ \delta')|{\mathcal F}|
\ge (1+ \delta) |{\mathcal F}|, 
\]
contradicting \refeq{Bound1}. Thus $\delta'<\delta$ proving the lemma. 
\end{proof}

\begin{lemma} \label{L2}
There exists $B_i \in \lb B_1, B_2, \ldots, B_r \rb$ such that $\left| {\mathcal F}_1 \lp B_i \rp  - {\mathcal F}\lp B - B_i \rp \right| \ge 
\frac{1- \delta}{1+\delta} \cdot \frac{\Phi_1(s)}{x}$.
\end{lemma}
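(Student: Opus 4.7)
The plan is to combine \reflm{L1} with a simple averaging argument over the $r$ pairwise disjoint blocks $B_1,\ldots,B_r$ composing $B$. The key set-theoretic observation, which I would establish first, is that since $B$ is a disjoint union, every $U \in {\mathcal F}_1(B)$ meets exactly one $B_i$ and meets it in exactly one element; conversely, any $U$ with $|U \cap B_i| = 1$ and $U \cap (B - B_i) = \emptyset$ satisfies $|U \cap B| = 1$. Thus ${\mathcal F}_1(B_i) - {\mathcal F}(B - B_i)$ is precisely the sub-family of ${\mathcal F}_1(B)$ that hits $B_i$, producing the disjoint decomposition
\[
{\mathcal F}_1(B) = \bigsqcup_{i=1}^{r} \lp {\mathcal F}_1(B_i) - {\mathcal F}(B - B_i) \rp,
\]
so in particular $|{\mathcal F}_1(B)| = \sum_{i=1}^{r} \left| {\mathcal F}_1(B_i) - {\mathcal F}(B - B_i) \right|$.

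From here I would pigeonhole to pick an index $i$ with $\left| {\mathcal F}_1(B_i) - {\mathcal F}(B - B_i) \right| \ge |{\mathcal F}_1(B)| / r$, and then substitute in sequence: the lower bound $|{\mathcal F}_1(B)| > (1-\delta)|{\mathcal F}|$ from \reflm{L1}, the upper bound $r < (1+\delta) x$ from \refeq{Cond1}, and the induction-step hypothesis $|{\mathcal F}| \ge \Phi_1(s)$. Chaining these gives
$\left| {\mathcal F}_1(B_i) - {\mathcal F}(B - B_i) \right| > \frac{(1-\delta)\Phi_1(s)}{(1+\delta) x} = \frac{1-\delta}{1+\delta} \cdot \frac{\Phi_1(s)}{x}$, which implies the desired $\ge$ inequality.

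Since the argument boils down to one application of averaging, there is no real obstacle. The only point that merits care is the partition identity above; it genuinely uses that ${\mathcal B}$ is coreless, i.e.\ that the $B_i$ are pairwise disjoint, so that membership in ${\mathcal F}_1(B)$ together with hitting $B_i$ forces disjointness from $B - B_i$. I would state this explicitly before invoking it, after which the rest is bookkeeping with the bounds already in place.
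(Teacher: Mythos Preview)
Your proposal is correct and is essentially the same argument as the paper's: the paper also observes (in fewer words) that the sets $U\in{\mathcal F}_1(B)$ are partitioned according to which $B_i$ they hit, averages over $i$ using \reflm{L1}, and then applies \refeq{Cond1} and $|{\mathcal F}|\ge\Phi_1(s)$ to reach the stated bound. Your version is slightly more explicit about the partition identity, but the strategy and the chain of inequalities are identical.
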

\begin{proof}
By \reflm{L1}, there exists $B_i \in \lb B_1, B_2, \ldots, B_r \rb$ such that the number of $U \in {\mathcal F}$ intersecting with $B_i$ by cardinality 1, and disjoint with $B- B_i$, is at least 
\[
\frac{\lp 1- \delta \rp|{\mathcal F}|}{r} > \frac{1-\delta}{(1+\delta)x} \cdot |{\mathcal F}| 
\ge \frac{1- \delta}{1+\delta} \cdot \frac{\Phi_1(s)}{x}.
\]
The family of such $U$ is exactly ${\mathcal F}_1(B_i)- {\mathcal F}\lp B - B_i \rp$, so its cardinality is no less than $\frac{1- \delta}{1+\delta} \cdot \frac{\Phi_1(s)}{x}$. 
\end{proof}

Assume such $B_i$ is $B_1$ without loss of generality. Then
\[
\left| {\mathcal F} \lp B_1 \rp - {\mathcal F}\lp B - B_1 \rp \right| \ge \left| {\mathcal F}_1 \lp B_1 \rp - {\mathcal F}\lp B - B_1 \rp \right| \ge \frac{1- \delta}{1+\delta} \cdot \frac{\Phi_1(s)}{x}>0.
\]
We choose any element $B_1' \in {\mathcal F}_1 \lp B_1 \rp - {\mathcal F}\lp B - B_1 \rp$ that is not $B_1$. Switch $B_1$ with $B_1'$ in ${\mathcal B}$. 
Since $B_1'$ is disjoint with any of $B_2, B_3, \ldots, B_r$, the obtained family $\lb B'_1, B_2, \ldots, B_r \rb$ is another maximal coreless sunflower in ${\mathcal F}$. We see the following inequality. 

\newpage
-
\begin{lemma} \label{L3} 
\[
\left| {\mathcal F} (B_1) \cap {\mathcal F} (B'_1) \right| < 
\frac{\Phi_1(s)}{x} \lp \frac{1}{s} + \frac{1}{x} \rp. 
\]
\end{lemma}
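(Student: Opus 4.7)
The plan is to decompose ${\mathcal F}(B_1) \cap {\mathcal F}(B_1')$ according to how each $U$ in it meets the overlap $B_1 \cap B_1'$. Because $B_1'$ was chosen from ${\mathcal F}_1(B_1)$, this overlap consists of exactly one element, which I will call $v$. Every $U \in {\mathcal F}(B_1) \cap {\mathcal F}(B_1')$ therefore falls into one of two cases: either $v \in U$, or $v \notin U$, in which case $U$ must separately hit $B_1 \setminus \lb v \rb$ and $B_1' \setminus \lb v \rb$.

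In the first case, the induction hypothesis \refeq{IH} applied to $S = \lb v \rb$ gives at most $\Phi_1(s-1) = \Phi_1(s)/(xs)$ choices for $U$. In the second case, $U$ contains some $u \in B_1 \setminus \lb v \rb$ together with some $u' \in B_1' \setminus \lb v \rb$, and these two elements are distinct because $B_1 \cap B_1' = \lb v \rb$, so $\lb u, u' \rb$ is a genuine $2$-element set. A union bound over the at most $(s-1)^2$ ordered pairs $(u, u')$, combined with $\left| {\mathcal F}_{sup}(\lb u, u' \rb) \right| < \Phi_1(s-2) = \Phi_1(s)/\lp x^2 s(s-1) \rp$ from \refeq{IH}, produces a total of at most $(s-1)\Phi_1(s)/(sx^2)$ for this case.

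Summing the two bounds yields
\[
\left| {\mathcal F}(B_1) \cap {\mathcal F}(B_1') \right| < \frac{\Phi_1(s)}{xs} + \frac{(s-1)\Phi_1(s)}{sx^2},
\]
and since $(s-1)/s < 1$ the right-hand side is strictly less than $\lp \Phi_1(s)/x \rp \lp 1/s + 1/x \rp$, which is exactly the claimed bound. I do not anticipate any real obstacle: the one point needing care is simply recognizing that $|B_1 \cap B_1'| = 1$ both makes the case split exhaustive and guarantees that $\lb u, u' \rb$ has size two, allowing \refeq{IH} to be invoked with $|S| = 2$. Everything else is a mechanical substitution using $\Phi_1(s-j) = \Phi_1(s)/\lp x^j s(s-1) \cdots (s-j+1) \rp$.
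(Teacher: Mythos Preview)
Your proposal is correct and follows essentially the same argument as the paper: decompose according to whether $U$ contains the single element $v$ of $B_1 \cap B_1'$, bound the first case by $\Phi_1(s-1)$ via \refeq{IH}, and bound the second by $(s-1)^2\Phi_1(s-2)$ via a union bound over pairs from $(B_1\setminus\lb v\rb)\times(B_1'\setminus\lb v\rb)$. The paper's computation is identical, yielding $\Phi_1(s-1) + (s-1)^2\Phi_1(s-2) = \frac{\Phi_1(s)}{sx} + \frac{(s-1)\Phi_1(s)}{sx^2} < \frac{\Phi_1(s)}{x}\lp\frac{1}{s}+\frac{1}{x}\rp$.
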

\begin{proof}
A set $U \in F(B_1) \cap F(B_1')$ intersects with $B_1 \cap B_1'$ of cardinality 1, or both $B_1-B_1'$ and $B_1' - B_1$ of cardinality $s-1$. By \refeq{IH}, there are at most 
\(
&&
\Phi_1(s-1) + (s-1)^2 \Phi_1(s-2)
= \frac{\Phi_1(s)}{sx} + (s-1)^2 \frac{\Phi_1(s)}{s(s-1)x^2}
\\ &<&
\frac{\Phi_1(s)}{x} \lp \frac{1}{s} + \frac{1}{x} \rp
\)
such $U \in F(B_1) \cap F(B_1')$. The lemma follows.
\end{proof}

${\mathcal F}= {\mathcal F}(B_1' \cup B_2 \cup B_3 \cup \cdots \cup B_r)$ would be true if the cardinality $r$ of the new coreless sunflower $\lb B_1', B_2, B_3 , \ldots, B_r \rb$ were maximum. However, it means that every element in ${\mathcal F} \lp B_1 \rp - {\mathcal F}\lp B - B_1 \rp$ is included in ${\mathcal F}(B_1')$. Thus, ${\mathcal F} (B_1) \cap F (B'_1) \supset {\mathcal F} \lp B_1 \rp - {\mathcal F}\lp B - B_1 \rp$, leading to
\[
\left| {\mathcal F} (B_1) \cap F (B'_1) \right| \ge \left| {\mathcal F} \lp B_1 \rp - {\mathcal F}\lp B - B_1 \rp \right| \ge \frac{1- \delta}{1+\delta} \cdot \frac{\Phi_1(s)}{x}
\ge \frac{\Phi_1(s)}{x} \lp \frac{1}{s} + \frac{1}{x} \rp.
\]
Its last inequality is confirmed with $s \ge 3$, $k \ge 3$, $x= \frac{k}{1+\delta}$, and $\frac{1-\delta}{1+\delta} =\frac{1}{3} + \frac{1+\delta}{3}$ as $\delta = \sqrt{10}-3$. 
This contradicts \reflm{L3}, completing the proof of Statement I.

\subsection{Statement II}

We prove the second statement by further developing the above method. We show that ${\mathcal F}$ includes a $k$-sunflower if $|{\mathcal F}|> \Phi_2(s)$ for a choice of sufficiently small constant $\ep \in (0,1/8)$. This suffices to prove Statement II thanks to \reflm{Phi2Bound}. The proof is by induction on $s$. Its basis occurs when $\min (k, s)$ is smaller than a sufficiently large constant $c_1$, $i.e.$, when 
$
\min (k, s)
$
is smaller than a lower bound $c_1$ on $\min (k, s)$ required by the proof below. To meet this case, we choose $\ep \in (0,1/8)$ to be smaller than $1 \big/ \ln c_1$. Then $\Phi_2(s) \ge k^s s!>(k-1)^s s!$ by the definition of $\Phi_2$ in \refsec{Settings}. The family ${\mathcal F}$ thus includes a $k$-sunflower by the sunflower lemma in the basis. 

Assume true for $1, 2, \ldots, s-1$ and prove true for $s$. The two integers $k$ and $s$ satisfy
\beeq{SffLarge}
\min (k, s) \ge c_1,
\eeq
$i.e.$, they are sufficiently large. Put
\[
p  \stackrel{def}{=} \frac{1}{8} \ln \min (k, s),
~\eqand~~
x \stackrel{def}{=} \frac{k}{p}.
\]
$p$ is sufficiently large since both $k$ and $s$ are. 

\medskip

\noindent
{\bf Note.~}
The lower bound $c_1$ on $\min (s, k)$ is required in order to satisfy \refeq{c1_1}, \refeq{c1_2}, \refeq{c1_3} and \refeq{c1_4} below, which are inequalities with fixed coefficients and no $\ep$. 
We choose $c_1$ as the minimum positive integer such that $\min(k, s) \ge c_1$ satisfies the inequalities, and also $\ep$ as $\min \lp \frac{1}{2\ln c_1}, \frac{1}{9} \rp$.

\medskip

As $p_s =  \ep \ln \min (k, s) < p$ in \refeq{Phi2}, 
\beeq{IH4}
\Phi_2(s-1) = \frac{p_s}{ks} \Phi_2(s) 
<\frac{p}{ks} \Phi_2(s) 
= \frac{\Phi_2(s)}{xs}.
\eeq
Similarly to \refeq{IH}, we have
\[
|{\mathcal F}_{sup} \lp s- |S| \rp| < \Phi_2(s- |S|)
\textrm{~~for any $S \subset X$ with $1 \le |S|<s$},
\]
as induction hypothesis. We also keep denoting a maximal coreless sunflower in ${\mathcal F}$ by ${\mathcal B} = \lb B_1, B_2, \ldots, B_r \rb$, and $B_1 \cup B_2 \cup \cdots \cup B_r$ by $B$. 
Also \refeq{Cover} holds due to the maximality of $r=|{\mathcal B}|$.

We prove $r \ge k$ for the induction step. Suppose contrarily that
\beeq{Cond2}
x \le r < k,
\eeq
and we will find a contradiction. Here $r \ge x$ is confirmed similarly to \refeq{SFProof} in \refsec{Method}, $i.e.$, by
\[
|{\mathcal F}(B)| \le |B| \Phi_2(s-1) \le rs \Phi_2(s-1)
\le rs \frac{\Phi_2(s)}{xs} \le \frac{r}{x} |{\mathcal F}|,
\]
with \refeq{IH4}, so $r<x$ would contradict the maximality of $r=|{\mathcal B}|$.

We start our proof by showing a claim seen similarly to \reflm{L1}.

\begin{lemma}  \label{A1} There exists a positive integer $j \le 2p$ such that 
$\left| {\mathcal F}_j \lp B \rp \right| \ge \frac{\Phi_2(s)}{4p}$. 
\end{lemma}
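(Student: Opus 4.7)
The plan is to prove \reflm{A1} by a double-counting argument on ${\mathcal P}(B)$ combined with averaging across the intersection-size strata, mirroring the derivation of $|{\mathcal P}(B)| \le (1+\delta)|{\mathcal F}|$ in \refeq{Bound1} from the proof of Statement I but now calibrated to the weaker per-element bound $\Phi_2(s-1) < \Phi_2(s)/(xs)$ supplied by \refeq{IH4}.

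First I would partition ${\mathcal F} = {\mathcal F}(B)$, using \refeq{Cover}, into the strata ${\mathcal F}_j(B)$ for $j = 1, 2, \ldots, s$, and double-count incidences $(v, U)$ with $v \in U \cap B$ to get $|{\mathcal P}(B)| = \sum_{j=1}^{s} j \cdot |{\mathcal F}_j(B)|$. An upper bound on $|{\mathcal P}(B)|$ then comes from applying the induction hypothesis to each singleton $\{v\}$ with $v \in B$: this gives $|{\mathcal P}(B)| \le |B| \cdot \Phi_2(s-1) \le rs \cdot \Phi_2(s)/(xs) = (r/x)\,\Phi_2(s)$, which by the assumption $r < k$ in \refeq{Cond2} is strictly less than $(k/x)\,\Phi_2(s) = p\,\Phi_2(s)$.

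For the lower bound I would argue by contradiction: suppose $|{\mathcal F}_j(B)| < \Phi_2(s)/(4p)$ for every $j \in \{1, 2, \ldots, 2p\}$. Summed over these strata, this accounts for fewer than $2p \cdot \Phi_2(s)/(4p) = \Phi_2(s)/2$ sets, so the higher strata $\bigcup_{j > 2p} {\mathcal F}_j(B)$ contain more than $\Phi_2(s)/2$ sets in view of $|{\mathcal F}| \ge \Phi_2(s)$. Each such set contributes at least $2p + 1$ pairs to ${\mathcal P}(B)$, yielding $|{\mathcal P}(B)| > (2p + 1) \cdot \Phi_2(s)/2 > p\,\Phi_2(s)$, which contradicts the upper bound and forces the desired $j \le 2p$.

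I do not anticipate a conceptual obstacle here: the argument is essentially pigeonhole between a per-element degree bound on $|{\mathcal P}(B)|$ delivered by induction and a weighted count across the intersection-size strata. The one thing to watch is that the strict inequality $r/x < p$ is available, which is exactly what $r < k$ in \refeq{Cond2} supplies; the threshold $2p$ is then chosen precisely so that the contradictory lower bound $(2p + 1)\Phi_2(s)/2$ just exceeds the upper bound $p\,\Phi_2(s)$, leaving the factor $1/(4p)$ in the conclusion as the resulting slack.
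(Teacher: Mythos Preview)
Your proposal is correct and follows essentially the same double-counting and pigeonhole approach as the paper's proof: bound $|{\mathcal P}(B)| < p\,\Phi_2(s)$ from above via the induction hypothesis together with $r<k$ from \refeq{Cond2}, then derive a contradiction if every stratum $j \le 2p$ is small. One tiny slip: since $2p$ need not be an integer, ``at least $2p+1$ pairs'' should read ``more than $2p$ pairs'' (equivalently ``at least $\lceil 2p \rceil$''), but the contradiction with the strict upper bound goes through unchanged.
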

\begin{proof}
We first show $\left| \sum_{0 \le j \le 2p} {\mathcal F}_j \lp B \rp  \right| \ge \frac{1}{2} |{\mathcal F}|$.
If not, there would be at least $\frac{1}{2} |{\mathcal F}|$ sets $U \in {\mathcal F}$ such that $|U \cap B| > 2p$ by the definition of ${\mathcal F}_j$ given in \refsec{Settings}. Each such $U$ creates at least $\lc 2p \rc$ pairs $(v, U) \in {\mathcal P}\lp B \rp$, so
\[
\left| {\mathcal P}\lp B \rp \right| \ge 2p  \cdot \frac{|{\mathcal F}|}{2} = p |{\mathcal F}|. 
\]
However, similarly to \refeq{Bound1},
\[
\left| {\mathcal P}(B) \right| \le |B| \Phi_2(s-1) \le r s \Phi_2(s-1) <ks  \cdot \frac{p \Phi_2(s)}{ks}
\le p |{\mathcal F}|,
\]
by induction hypothesis and \refeq{IH4}. By the contradictory two inequalities,\\ $\left| \sum_{0 \le j \le 2p} {\mathcal F}_j \lp B \rp  \right| < \frac{1}{2} |{\mathcal F}|$ is false. Thus $\left| \sum_{0 \le j \le 2p} {\mathcal F}_j \lp B \rp  \right| \ge \frac{1}{2} |{\mathcal F}|$. 

Let $j$ be an integer in $[1, 2p]$ with the maximum cardinality of ${\mathcal F}_j \lp B \rp $. By the above claim and \refeq{Cover}, 
$
\left| {\mathcal F}_j \lp B \rp \right| \ge \frac{|{\mathcal F}|}{2} \cdot \frac{1}{2p} \ge \frac{\Phi_2(s)}{4p}
$,
proving the lemma.
\end{proof}

Fix this integer $j \in [1, 2p]$. Next we find a small sub-family ${\mathcal B}'$ of ${\mathcal B}$ such that \refeq{FindB'} is large enough. For each non-empty ${\mathcal B}' \subset {\mathcal B}$, define
\(
&&
{\mathcal G} \lp {\mathcal B}' \rp \stackrel{def}{=} 
\lb U ~:~ U \in {\mathcal F}_j \lp B \rp,~
\forall S \in {\mathcal B}', U \cap S \ne \emptyset
\textrm{~and~} 
\forall S' \in {\mathcal B} - {\mathcal B}', U \cap S' = \emptyset
\rb.
\)
Observe a lemma regarding ${\mathcal G}\lp {\mathcal B}' \rp$.

\begin{lemma} \label{A2}
There exists a nonempty sub-family ${\mathcal B}' \subset {\mathcal B}$ of cardinality at most $j$ such that 
$
\left| {\mathcal G}\lp {\mathcal B}' \rp \right| \ge \frac{\Phi_2(s)}{8p {r \choose j}}$.
\end{lemma}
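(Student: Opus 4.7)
The plan is to classify each $U \in {\mathcal F}_j(B)$ by which members of ${\mathcal B}$ it meets, and then apply an averaging argument over the resulting partition. For each $U \in {\mathcal F}_j(B)$, set ${\mathcal B}'(U) = \left\{ B_i \in {\mathcal B} : U \cap B_i \ne \emptyset \right\}$. Because the $B_i$ are pairwise disjoint and $|U \cap B| = j$, the sub-family ${\mathcal B}'(U)$ is nonempty with $|{\mathcal B}'(U)| \le j$, and $U$ lies in ${\mathcal G}({\mathcal B}'(U))$. Moreover $U \in {\mathcal G}({\mathcal B}')$ forces ${\mathcal B}' = {\mathcal B}'(U)$, so the pieces ${\mathcal G}({\mathcal B}')$, as ${\mathcal B}'$ ranges over nonempty sub-families of ${\mathcal B}$ of cardinality at most $j$, partition ${\mathcal F}_j(B)$.

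The number of admissible ${\mathcal B}'$ is at most $N := \sum_{i=1}^{j} \binom{r}{i}$, so by \reflm{A1} and the pigeonhole principle, some such ${\mathcal B}'$ achieves
\[
|{\mathcal G}({\mathcal B}')| \;\ge\; \frac{|{\mathcal F}_j(B)|}{N} \;\ge\; \frac{\Phi_2(s)}{4pN}.
\]
It therefore suffices to show $N \le 2\binom{r}{j}$, which would deliver the advertised bound $\Phi_2(s)/(8p\binom{r}{j})$.

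To obtain the tail bound, I would use $\binom{r}{i-1}/\binom{r}{i} = i/(r-i+1) \le 1/2$ for all $1 \le i \le j$ whenever $r \ge 3j - 1$; in that regime $\sum_{i=1}^{j}\binom{r}{i}$ is geometrically dominated by its largest term, yielding $N \le 2\binom{r}{j}$. The condition $r \ge 3j - 1$ follows from the hypotheses already in play: \refeq{Cond2} gives $r \ge x = k/p$, \reflm{A1} gives $j \le 2p$, hence $r/j \ge k/(2p^2)$. Since $p = \frac{1}{8} \ln \min(k, s)$ and $k \ge \min(k, s) \ge c_1$ by \refeq{SffLarge}, this ratio grows without bound in $c_1$, so choosing $c_1$ sufficiently large (one of the fixed-coefficient constraints \refeq{c1_1}--\refeq{c1_4} flagged in the note) secures $r \ge 3j - 1$.

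The main obstacle is exactly this binomial tail estimate: everything else is routine partition-and-average bookkeeping, but without the hypothesis \refeq{SffLarge} forcing $\min(k, s)$ to be large, we could not ensure $r \gg j$, and the pigeonhole step would lose far more than the factor of $2$ that the statement allows. In particular, the estimate has to be tight enough to absorb the logarithmic slack $j \le 2p$ created in \reflm{A1}, which is the reason the proof is arranged so that $r$ and $j$ live on very different scales.
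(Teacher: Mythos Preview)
Your proposal is correct and follows essentially the same approach as the paper: both partition ${\mathcal F}_j(B)$ according to which $B_i$ each set meets, bound the number of classes by $\sum_{i=1}^{j}\binom{r}{i}\le 2\binom{r}{j}$ via the geometric ratio $\binom{r}{i-1}/\binom{r}{i}=i/(r-i+1)\le\frac{1}{2}$, and pigeonhole against \reflm{A1}. Your verification that $r\ge 3j-1$ from $r\ge k/p$, $j\le 2p$ and \refeq{SffLarge} is exactly the content of the paper's inequality \refeq{c1_1}.
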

\begin{proof}
By definition, the cardinality of ${\mathcal B}'$ such that ${\mathcal G}\lp {\mathcal B}' \rp \ne \emptyset$ does not exceed $j$. Thus there are at most 
\(
&&
{r \choose j} + {r \choose j-1} + {r \choose j-2} + \cdots + {r \choose 1}
\\ &<&
{r \choose j} \lp 1 + \frac{j}{r-j+1} + \lp \frac{j}{r-j+1} \rp^2 + \lp \frac{j}{r-j+1} \rp^3 + \cdots \rp
\le
2 {r \choose j}
\)
such possible ${\mathcal B}' \subset {\mathcal B}$. Here its truth is confirmed by the following arguments.
\begin{enumerate} [$\bullet$]
\item ${n \choose m-1} = \frac{m}{n-m+1}{n \choose m}$ for any $n, m \in \Z^+$ such that $m \le n$. So ${r \choose j-1} = \frac{j}{r-j+1}{r \choose j}$, ${r \choose j-2} = \frac{j-1}{r-j+2}{r \choose j-1}<\lp \frac{j}{r-j+1} \rp^2 {r \choose j}$, 
${r \choose j-3} = \frac{j-2}{r-j+3}{r \choose j-2}<\lp \frac{j}{r-j+1} \rp^3 {r \choose j}$, $\cdots$.
\item The last inequality is due to 
$
r \ge x =\frac{k}{p} = \frac{8k}{\ln \min \lp k, s \rp} > \frac{k}{\ln k}
$
by \refeq{Cond2}, and $j \le 2p < \ln k$. Thus 
\beeq{c1_1}
\frac{j}{r-j+1} < \frac{\ln k}{\frac{k}{\ln k} -  \ln k+1}< \frac{1}{2},
\eeq
by \refeq{SffLarge} where $k$ is sufficiently large. So the last inequality holds in the above. 
\end{enumerate}

Then by \reflm{A1}, there exists at least one nonempty ${\mathcal B}' \subset {\mathcal B}$ such that 
\[
\left| {\mathcal G}\lp {\mathcal B}' \rp \right| 
> \frac{|{\mathcal F}_j(B)|}{2{r \choose j}}
\ge \frac{\Phi_2(s)}{2 {r \choose j} \cdot 4p} 
=\frac{\Phi_2(s)}{8p {r \choose j}}.
\]
The lemma follows. 
\end{proof}

We now construct a sub-family ${\mathcal H}$ of \refeq{FindB'} in which we will find a larger maximal coreless sunflower. Fix a sub-family ${\mathcal B}' \subset {\mathcal B}$ decided by \reflm{A2}. Put
\(
&&
B' \stackrel{def}{=} \bigcup_{S \in {\mathcal B}'} S,
\\ &&
r' \stackrel{def}{=} |{\mathcal B}'| \le j \le 2p, 
\\ \textrm{and~~} &&
{\mathcal H} \stackrel{def}{=} F_j \lp B'  \rp - F \lp B - B' \rp
=
{\mathcal F}_j \lp \bigcup_{S \in {\mathcal B}'} S \rp - {\mathcal F} \lp \bigcup_{S' \in {\mathcal B} - {\mathcal B}'} S' \rp.
\)
The family ${\mathcal H}$ includes ${\mathcal G}\lp {\mathcal B'} \rp$ by definition, so 
\beeq{CardinalityH}
|{\mathcal H}| \ge \frac{\Phi_2(s)}{8p {r \choose j}},
\eeq
by \reflm{A2}. If we find a maximal coreless sunflower in ${\mathcal H}$ whose cardinality is larger than $r' = |{\mathcal B}'|$, it means the existence of a coreless sunflower in ${\mathcal F}$ with cardinality larger than $r$, since any $U \in {\mathcal H}$ is disjoint with $B-B'$.

Extending the notation ${\mathcal F}(S)$, write
\[
{\mathcal H}\lp S \rp \stackrel{def}{=} \lb U~:~ U \in {\mathcal H} 
\textrm{~and~}
U \cap S \ne \emptyset
\rb,
\]
for a nonempty set $S \subset X$. Then ${\mathcal H}(\lb v \rb)$ for an element $v \in X$ is the family of $U \in {\mathcal H} \subset {\mathcal F}$ containing $v$.

Let us show two lemmas on ${\mathcal H}$ and ${\mathcal H}\lp \lb v \rb \rp$.  By them we will see that the latter is sufficiently smaller than the former. 

\begin{lemma} \label{A2'}
$\left| {\mathcal H}  \right| > s^j \Phi_2(s-j) \exp \lp - 4p  \rp$.
\end{lemma}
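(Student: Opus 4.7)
The plan is to chain the lower bound $|{\mathcal H}| \ge \Phi_2(s)/(8p \binom{r}{j})$ from \refeq{CardinalityH} with two estimates already on hand: the upper bound $\binom{r}{j} < (er/j)^j \le (ek/j)^j$, obtained from \refeq{asymptotic} together with $r < k$ from \refeq{Cond2}; and the recursive lower bound $\Phi_2(s) > \Phi_2(s-j) \cdot (ks/p_s)^j \exp(-j^2/s - 1)$ supplied by \reflm{Stirling2}. Substituting these into the starting inequality and letting the factors $k^j$ cancel with the $(ek)^j$ from the binomial bound, one arrives at
\[
|{\mathcal H}| > s^j \Phi_2(s-j) \cdot \frac{j^j}{8 p (e p_s)^j} \exp\lp -\frac{j^2}{s} - 1 \rp.
\]

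After this, the target $|{\mathcal H}| > s^j \Phi_2(s-j) \exp(-4p)$ reduces to verifying, for every integer $j \in [1, 2p]$, the scalar inequality
\[
j \ln \frac{j}{e p_s} + 4p > \frac{j^2}{s} + 1 + \ln(8p).
\]
Treated as a function of a real variable $j > 0$, the term $j \ln(j/(e p_s))$ attains its minimum value $-p_s$ at $j = p_s$, so the left-hand side is at least $(4 - 8\ep)\, p$ after substituting $p_s = 8 \ep p$. With the paper's standing choice $\ep \le 1/9$ this yields $(4-8\ep)\, p \ge (28/9)\, p$. On the right-hand side, the bound $j \le 2p \le \frac{1}{4}\ln s$ forces $j^2/s \le (\ln s)^2/(16 s) = o(1)$, so the dominant term is the logarithmic $\ln(8p)$; the inequality $(28/9)\, p > 1 + \ln(8p) + o(1)$ then holds once $p = \frac{1}{8}\ln \min(k,s)$ exceeds a fixed absolute threshold, which is exactly the sort of condition absorbed into the lower bound $c_1$ on $\min(k,s)$, in the spirit of \refeq{c1_1}.

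The only non-routine part of the plan is the exponent bookkeeping. The $4p$ slack on the target has to absorb both the loss $-p_s = -8\ep p$ incurred at the worst-case value $j = p_s$ and the logarithmic overhead $\ln(8p)$ coming from the $8p$ factor in the bound on $|{\mathcal H}|$. The choice $\ep \le 1/9$ reserves roughly $3p$ of room, which is comfortably more than the $O(\ln p)$ contributions on the right-hand side once $\min(k, s) \ge c_1$ is imposed; any slacker choice of $\ep$ (e.g.\ $\ep$ approaching $1/8$) would have eaten into this margin and forced a more delicate argument.
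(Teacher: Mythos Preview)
Your proposal is correct and follows essentially the same route as the paper: start from \refeq{CardinalityH}, bound $\binom{r}{j}$ via \refeq{asymptotic} and $r<k$, apply \reflm{Stirling2}, and reduce to a scalar inequality in $j$ that holds once $p$ is large. The differences are cosmetic: the paper immediately replaces $p_s$ by $p$ (using $p_s<p$) and $j^2/s$ by $j$ (using $j\le s$), then maximizes $j\ln(p/j)+2j$ over $j\in[1,2p]$ to obtain the threshold inequality \refeq{c1_2}, whereas you keep the sharper $p_s$ and $j^2/s$, minimize $j\ln(j/(ep_s))$ over real $j>0$, and then invoke the explicit choice $\ep\le 1/9$ from the paper's Note; both lead to the same conclusion under \refeq{SffLarge}.
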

\begin{proof}
We have two facts on \refeq{CardinalityH}. 
\begin{enumerate} [$\bullet$]
\item The natural logarithm of the denominator $8p {r \choose j}$ is upper-bounded by 
\[
\ln 8p {r \choose j}
< j \ln \frac{r}{j} + j + \ln 8p
< j \ln \frac{k}{j} + j + \ln 8p
= j \ln \frac{xp}{j} + j + \ln 8p,
\]
due to \refeq{asymptotic}, \refeq{Cond2} and $x = \frac{k}{p}$. 
\item Let $d = \ln \Phi_2(s) - \ln \Phi_2(s-j)$, or $\Phi_2(s) =  \Phi_2(s-j) \exp(d)$. It satisfies
\[
d> j \ln \frac{ks}{p_s} -\frac{j^2}{s} -1
>
j \ln \frac{ks}{p} -\frac{j^2}{s} -1
= j \ln sx -\frac{j^2}{s} -1
> j \ln sx -j -1,
\]
by \reflm{Stirling2}, $p_s < p$, and $j \le s$.
\end{enumerate}
Then, 
\(
\left| {\mathcal H} \right| 
&\ge&\frac{\Phi_2(s)}{8p{r \choose j}}
>
\Phi_2(s-j) \exp \lp 
\lp j \ln s x  - j  -1  \rp
- \lp j \ln \frac{xp}{j} + j + \ln 8p \rp
\rp
\nexteqline
s^j \Phi_2(s-j) \exp \lp 
 - j \ln \frac{p}{j}  - 2 j  - \ln 8p   - 1 
\rp.
\)

Find $\max_{1 \le j \le 2p} \lp j \ln \frac{p}{j}  + 2j \rp$ regarding $j$ as a real parameter. The maximum value $(4 - 2 \ln 2)p$ is achieved when $j=2p$. Since $p$ is sufficiently large by \refeq{SffLarge}, 
\beeq{c1_2}
2 p \ln 2>1+ \ln 8p.
\eeq
Hence, 
\(
\left| {\mathcal H}  \right| &>& 
s^j \exp \lp 
 - j \ln \frac{p}{j}  - 2 j  - \ln 8p   - 1 
\rp
\ge
s^j \exp \lp 
 - (4- 2 \ln 2) p  - \ln 8p   - 1 
\rp
\\ &>&
s^j \Phi_2(s-j) \exp \lp - 4 p \rp, 
\)
completing the proof. 
\end{proof}

\begin{lemma} \label{A3} 
The following two statements hold true.
\begin{enumerate} [i)]
\item $|{\mathcal H}\lp \lb v \rb \rp| \le \frac{1}{s} \cdot e^{7p} \cdot |{\mathcal H}|$ for every $v \in B'$. 
\item $|{\mathcal H}\lp \lb v \rb \rp| \le \frac{1}{(s-j)x} \cdot e^{7p} \cdot |{\mathcal H}|$ for every $v \in X- B'$. 
\end{enumerate}
\end{lemma}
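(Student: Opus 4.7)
The plan is to bound $|{\mathcal H}(\lb v \rb)|$ from above via the induction hypothesis $|{\mathcal F}_{sup}(S)| < \Phi_2(s - |S|)$ applied to the relevant intersection $S \subset B'$ (supplemented by $\lb v \rb$ in part (ii)), and then to divide by the lower bound $|{\mathcal H}| > s^j \Phi_2(s - j) e^{-4p}$ from \reflm{A2'}. The factor $\Phi_2(s - j)$ will cancel, leaving only a combinatorial count of candidate subsets times a tame exponential. Both stay under control because $j \le 2p$ and $|B'| \le \sum_{S \in {\mathcal B}'} |S| \le s r' \le s j \le 2sp$.

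For part (i), I would fix $v \in B'$ and observe that every $U \in {\mathcal H}(\lb v \rb)$ has $|U \cap B'| = j$ with $v \in U \cap B'$, so $U \supset S$ for some $j$-subset $S \subset B'$ containing $v$. There are $\binom{|B'| - 1}{j - 1}$ such $S$, and induction gives $|{\mathcal F}_{sup}(S)| < \Phi_2(s - j)$ for each. Combining $\binom{n}{m} \le (en/m)^m$ with $(j/(j-1))^{j-1} \le e$ yields $\binom{|B'| - 1}{j - 1} \le s^{j-1} e^j$, and dividing by the $|{\mathcal H}|$ bound produces $e^{j + 4p}/s \le e^{7p}/s$ since $j \le 2p$.

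For part (ii), I would fix $v \in X - B'$ and note that every $U \in {\mathcal H}(\lb v \rb)$ satisfies $U \supset S \cup \lb v \rb$ for some $j$-subset $S \subset B'$, where $|S \cup \lb v \rb| = j + 1 < s$ by \refeq{SffLarge}. Induction then gives $|{\mathcal F}_{sup}(S \cup \lb v \rb)| < \Phi_2(s - j - 1)$, which I would convert to $\Phi_2(s - j)$ through the recursion $\Phi_2(s - j) = \frac{k(s-j)}{p_{s-j}} \Phi_2(s - j - 1)$ together with $p_{s-j} \le p$, giving $\Phi_2(s - j - 1) \le \Phi_2(s - j)/((s - j) x)$. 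With at most $\binom{|B'|}{j} \le (es)^j = s^j e^j$ admissible subsets $S$, the same division by $|{\mathcal H}|$ gives $e^{j + 4p}/((s - j) x) \le e^{7p}/((s - j) x)$.

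The main obstacle to watch is the sharpness of the binomial estimate: a crude application such as $\binom{sj}{j-1} \le (2es)^{j-1}$ would force $(j - 1)(1 + \ln 2) + 4p \le 7p$, which fails for $j$ close to $2p$. The refinement that peels off the factor $(j/(j-1))^{j-1} \le e$ separately, replacing the constant $2e$ by a single $e$, is what keeps the final exponent within $7p$ uniformly across the whole range $j \in [1, 2p]$.
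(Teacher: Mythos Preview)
Your proposal is correct and follows essentially the same approach as the paper: count the admissible $j$-subsets of $B'$ (containing $v$ in part (i), arbitrary in part (ii)), apply the induction hypothesis to each, and divide by the lower bound on $|{\mathcal H}|$ from \reflm{A2'}. The only difference is bookkeeping in the binomial estimate: the paper first rewrites $\binom{r's-1}{j-1}=\frac{j}{r's}\binom{r's}{j}$ and then uses $r'\le j$ to drop the factor $\exp(j\ln(r'/j))$, arriving at $s^{j}\Phi_2(s-j)\,e^{3p}/s$ via $2p+\ln 2p<3p$; your direct bound $\binom{|B'|-1}{j-1}\le s^{j-1}e^{j}$ is slightly sharper and yields $e^{j+4p}/s\le e^{6p}/s$, comfortably within the stated $e^{7p}/s$. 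Your remark about why the cruder estimate $(2es)^{j-1}$ just fails at $j=2p$ is a nice diagnostic that the paper does not make explicit.
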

\begin{proof}
i): Let $U$ be any set in ${\mathcal H}\lp \lb v \rb \rp$ for the given $v \in B'$, and write $U'= U \cap B'$. Since $U'$ has cardinality $j$ containing $v$, there are no more than 
\[
{|B'|-1 \choose j-1} \le {r' s-1 \choose j-1}= \frac{j}{r' s} {r' s \choose j} \le \frac{j}{s}{r' s \choose j}
\]
choices of $U'$.  Here the identity ${r' s \choose j}= \frac{r' s}{j} {r' s-1 \choose j-1}$ is used. By the induction hypothesis on $s$, the number of $U \in {\mathcal H}\lp \lb v \rb \rp$ is upper-bounded by
\(
\left| {\mathcal H}\lp \lb v \rb \rp \right| &\le& \frac{j}{s}{r' s \choose j}  \cdot  \Phi_2(s-j)
\le \frac{\Phi_2(s-j)}{s}
\exp\lp \ln j + \lp j \ln \frac{r' s}{j} + j \rp \rp
\\ &\le&
s^j  \Phi_2(s-j) \frac{\exp \lp j \ln \frac{r'}{j} + j + \ln j \rp}{s}
\le
s^j  \Phi_2(s-j) \frac{\exp \lp j + \ln j \rp}{s}
\\ &\le&
s^j  \Phi_2(s-j) \frac{\exp \lp 2p  + \ln 2p \rp}{s}
<
s^j  \Phi_2(s-j) \frac{\exp \lp 3p \rp}{s},
\)
where ${r' s \choose j} \le \exp \lp j \ln \frac{r' s}{j} + j \rp$ is due to \refeq{asymptotic}, and
\beeq{c1_3}
2p + \ln 2p < 3p,
\eeq
due to \refeq{SffLarge}.

With the previous lemma, we see that the ratio $|{\mathcal H}\lp \lb v \rb \rp|\big/ |{\mathcal H}|$ does not exceed
\(
&&
\frac{s^j  \Phi_2(s-j) \exp \lp 3p \rp \big/ s}
{s^j  \Phi_2(s-j) \exp \lp - 4p \rp}
= \frac{e^{7p}}{s},
\)
proving i). 

ii): As $v \in X - B'$, the number of choices of $U' =U \cap B'$ is now ${|B'| \choose j} \le {r' s \choose j}$. For each such $U'$, the number of sets $U \in {\mathcal H}$ containing $U' \cup \lb v \rb$ is at most
\(
\Phi_2(s-j-1) &=& \frac{p_{s-j}}{k(s-j)} \Phi_2(s-j)
<\frac{p}{k(s-j)} \Phi_2(s-j)
=\frac{1}{x(s-j)} \Phi_2(s-j),
\)
by induction hypothesis and $p_{s-j} \le p_s < p$. 
Hence $|{\mathcal H}\lp \lb v \rb \rp|$ is upper-bounded by $\frac{\Phi_2(s-j)}{(s-j)x}{r' s \choose j}$. Then argue similarly to i). 
\end{proof}

\reflm{A3} means that ${\mathcal H}$ includes a coreless sunflower of cardinality more than $r'=|{\mathcal B}'|$. Let us formally prove it with the following lemma.

\begin{lemma} \label{A4} 
$
|{\mathcal B}'| \cdot |{\mathcal H}(V)| \le \frac{1}{2} |{\mathcal H}|
$
for every $V \in {\mathcal H}$.
\end{lemma}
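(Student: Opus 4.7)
The plan is a straightforward union bound over the elements of $V$, split according to whether each element lies in $B'$ or outside $B'$, followed by an arithmetic check that the bound on $\min(k,s)$ forces the resulting estimate below $\tfrac{1}{2}$. This last arithmetic check is presumably what the author has in mind when referring to the remaining inequality \refeq{c1_4} controlled by the constant $c_1$.

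First I would observe that any $V\in {\mathcal H}$ lies in ${\mathcal F}_j\lp B' \rp - {\mathcal F}\lp B-B' \rp$, so $|V \cap B'|=j$ while $V$ is disjoint from $B-B'$; in particular $|V-B'| \le s-j$. Since ${\mathcal H}(V) \subset \bigcup_{v \in V} {\mathcal H}\lp \lb v \rb \rp$, a union bound gives
\[
\left| {\mathcal H}(V) \right| \le \sum_{v \in V \cap B'} \left| {\mathcal H}\lp \lb v \rb \rp \right| + \sum_{v \in V - B'} \left| {\mathcal H}\lp \lb v \rb \rp \right|.
\]
Applying part (i) of \reflm{A3} to each of the $j$ summands in the first sum and part (ii) to each of the at most $s-j$ summands in the second sum yields
\[
\left| {\mathcal H}(V) \right| \le \left( j \cdot \frac{1}{s} + (s-j) \cdot \frac{1}{(s-j)x}\right) e^{7p} |{\mathcal H}| = \left( \frac{j}{s} + \frac{1}{x}\right) e^{7p} |{\mathcal H}|.
\]

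Next I would multiply by $|{\mathcal B}'| = r' \le j \le 2p$ to obtain
\[
|{\mathcal B}'| \cdot \left| {\mathcal H}(V) \right| \le 2p\left( \frac{2p}{s} + \frac{1}{x}\right) e^{7p} |{\mathcal H}|.
\]
Recalling that $p = \tfrac{1}{8}\ln \min(k,s)$ so $e^{7p} = \min(k,s)^{7/8}$, and $\tfrac{1}{x} = \tfrac{p}{k}$, both $\tfrac{2p}{s}$ and $\tfrac{1}{x}=\tfrac{p}{k}$ are at most $\tfrac{2p}{\min(k,s)}$. Hence the right-hand coefficient is at most
\[
\frac{8 p^2}{\min(k,s)^{1/8}} = \frac{8}{64}\cdot \frac{(\ln \min(k,s))^2}{\min(k,s)^{1/8}},
\]
which tends to $0$ as $\min(k,s)\to \infty$.

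The main (and essentially only) obstacle is thus the final numerical inequality
\[
2p \lp \frac{2p}{s} + \frac{1}{x}\rp e^{7p} \le \frac{1}{2},
\]
which one needs to be one of the fixed-coefficient inequalities that the threshold $c_1$ in \refeq{SffLarge} is chosen to enforce. Assuming \refeq{SffLarge} with $c_1$ large enough to validate this estimate (which the author calls \refeq{c1_4}), the conclusion $|{\mathcal B}'|\cdot |{\mathcal H}(V)| \le \tfrac{1}{2}|{\mathcal H}|$ follows immediately.
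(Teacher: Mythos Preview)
Your proof is correct and essentially identical to the paper's: both split the union bound over $v\in V$ according to membership in $B'$, apply the two parts of \reflm{A3}, and then verify an explicit inequality in $p$ guaranteed by \refeq{SffLarge}. Your final constant $8p^2 e^{-p}$ is even slightly sharper than the paper's $8p^3 e^{-p}$ (the paper first weakens $\frac{j}{s}+\frac{1}{x}$ to $j\bigl(\frac{1}{s}+\frac{1}{x}\bigr)$ before substituting $j\le 2p$), and your guess about \refeq{c1_4} is exactly right.
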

\begin{proof}
Let
\[
y \stackrel{def}{=} \min \lp s, k \rp,
\textrm{~~~so that~~~}
p = \frac{1}{8} \ln y,~~
y = e^{8p},~~
s \ge y,
\textrm{~~and~~}
x = \frac{k}{p} \ge \frac{y}{p}.
\]
We have
\beeq{c1_4}
p \ge 1
\eqand
8 p^3 e^{-p} < \frac{1}{2},
\eeq
due to \refeq{SffLarge}. 

Fix each $V \in {\mathcal H}$. By \reflm{A3}, the family ${\mathcal H}(V)$ has a cardinality bounded by 
\beeqn
|{\mathcal H}(V)| &=& \nonumber
\bigcup_{v \in V} |{\mathcal H}\lp \lb v \rb \rp|  \le \lp j \cdot \frac{e^{7p}}{s}
+ (s-j) \frac{e^{7p}}{(s-j)x} 
\rp |{\mathcal H}|
\\ &\le& \nonumber 
j e^{7p}\lp  \frac{1}{s}
+ \frac{1}{x}
\rp |{\mathcal H}|
\le 
j e^{7p}\lp  \frac{1}{y}
+ \frac{p}{y}
\rp |{\mathcal H}|
\le
j e^{7p} \cdot \frac{2p}{y} \cdot |{\mathcal H}|
\\ &\le& \nonumber 
4p^2 e^{7p} \cdot \frac{|{\mathcal H}|}{y}
= 
4p^2 e^{7p} \cdot \frac{|{\mathcal H}|}{e^{8p}}
=4p^2 e^{-p} \cdot |{\mathcal H}|.
\eeqn
Therefore, by $|{\mathcal B}'| = r' \le j \le 2p$, 
\[
|{\mathcal B}'| \cdot |{\mathcal H}(V)| 
\le 2p |{\mathcal H}(V)|
\le 8 p^3 e^{-p} \cdot |{\mathcal H}|
< \frac{1}{2} |{\mathcal H}|.
\]
The lemma follows.
\end{proof}

Hence each $V \in {\mathcal H}$ intersects with at most $\frac{|{\mathcal H}|}{2 |{\mathcal B}'|}$ sets in ${\mathcal H}$. There exist more than $|{\mathcal B}'|$ pairwise disjoint sets in ${\mathcal H}$. By definition, every element in ${\mathcal H}$ is disjoint with a set in ${\mathcal B}- {\mathcal B}'$. The cardinality $r$ of the coreless sunflower ${\mathcal B}$ in ${\mathcal F}$ is therefore not maximum. This contradiction proves Statement II, completing the proof of \refth{Main}.

\section*{Appendix: Proofs of Lemmas \ref{Phi2Bound} and \ref{Stirling2}}

{\bf Lemma \ref{Phi2Bound}.~}
There exists a constant $c>0$ such that 
$
\Phi_2(s) \le \lp \frac{c k}{\ln \min \lp k, s \rp} \rp^s s!
$
for any $s \ge 2$ and $k \ge 2$.
\begin{proof}
We first show the lemma when $s \le k$. By the definition of $\Phi_2(s)$ in \refsec{Settings}, 
\[
\Phi_2(s) 
= \frac{1}{\ln 2 \cdot \ln 3 \cdots \ln s} \lp \frac{k}{\ep} \rp^{s} s! .
\]
We assume $s \ge 3$ since the claim is trivially true for $s=2$. 
It suffices to show
$
\frac{1}{\ln 2 \cdot \ln 3 \cdots \ln s} \lp \frac{k}{\ep} \rp^{s} s!
\le \lp \frac{e^2}{\ln s}  \rp^s  \lp \frac{k}{\ep} \rp^{s} s!,
$
or
\beeqn
&& \label{eqPhi2Bound}
\ln 2 \cdot \ln 3 \cdots \ln s \ge \lp \frac{\ln s}{e^2} \rp^s
\\ &\Leftrightarrow~~~& \nonumber
\ln \ln 2 + \ln \ln 3 + \cdots + \ln \ln s \ge s \ln \ln s - 2s.
\eeqn
$\ln \ln x$ is a smooth, monotonically increasing function of $x \in \R^+$, so 
\(
&&
\ln \ln 3 + \ln \ln 4 + \cdots + \ln \ln s>
\int_{2}^{s} \ln \ln x ~dx 
\nexteqline (s \ln \ln s - li(s) )- (2 \ln \ln 2  - li(2)).
\)
Here $li(s)$ is the logarithmic integral $\int_{0}^s \frac{dx}{\ln x}$. As $\ln \ln 2<0$, 
\[
\ln \ln 2 + \ln \ln 3 + \cdots + \ln \ln s>
s \ln \ln s - li(s) + li(2),
\]
where $li(s) - li(2) = \int_{2}^s \frac{dx}{\ln x}$ is upper-bounded by $(s-2)/ \ln 2 < 2s$. Hence, 
$
\ln \ln 2 + \ln \ln 3 + \cdots + \ln \ln s \ge s \ln \ln s - 2s,
$
proving the lemma when $s \le k$.

If $s>k$, the lemma is also proved by \refeq{eqPhi2Bound}:
\(
\Phi_2(s) &=&
\frac{1}{\ln 2 \cdot \ln 3 \cdots \ln k \cdot \lp \ln k \rp^{s-k}} \lp \frac{k}{\ep} \rp^{s} s!
\\ &\le&
\frac{1}{\lp \frac{\ln k}{e^2} \rp^k \cdot \lp \ln k \rp^{s-k}} \lp \frac{k}{\ep} \rp^{s} s!
<
\lp \frac{e^2 k}{\ep \ln k} \rp^s s!.
\)
This completes the proof. 
\end{proof}

\medskip

{\bf Lemma \ref{Stirling2}.~}
$\Phi_2(s) > \Phi_2(s-j) \exp \lp j \ln \frac{ks}{p_s}  - \frac{j^2}{s}  -1 \rp$
for a positive integer $j<s$. 
\begin{proof}
It suffices to show  $\lp \frac{k}{p_s} \rp^j \frac{s!}{(s-j)!}> \exp \lp j \ln \frac{ks}{p_s}  - \frac{j^2}{s}  -1 \rp$ due to \refeq{Phi2}, or 
\[
\frac{s!}{(s-j)!}> \exp \lp j \ln s  - \frac{j^2}{s}  -1 \rp. 
\]
By \refeq{Stirling}, $\ln \frac{s!}{(s-j)!}$ is at least
\(
&&
\lp s \ln s - s + \ln \sqrt{2 \pi s} \rp
- \lp (s-j) \ln (s-j) - (s-j) + \ln \sqrt{s-j} + 1 \rp
\\ &=& 
s \ln s - (s-j) \ln (s-j) - j + \ln \frac{\sqrt{2 \pi s}}{\sqrt{s-j}} - 1
\\ &>& 
s \ln s - (s-j) \lp \ln s +   \ln \lp 1 - \frac{j}{s} \rp  \rp - j - 1
\nexteqline
j \ln s - \lp s-j \rp \ln \lp 1 - \frac{j}{s} \rp - j - 1. 
\)
By the Taylor series of natural logarithm, 
$
- \ln \lp 1- \frac{j}{s} \rp = \sum_{i=1}^\infty \frac{1}{i} \lp \frac{j}{s} \rp^i > \frac{j}{s}
$.
From these,
\[
\ln \frac{s!}{(s-k)!} 
> j \ln s + \lp s-j \rp \frac{j}{s} - j -1
\ge j \ln s - \frac{j^2}{s} -1,
\]
which is equivalent to the desired inequality to prove the lemma.
\end{proof}

\bibliographystyle{amsplain}

\bibliography{ref}

\end{document}